\documentclass[preprint,12pt]{elsarticle}

\usepackage[T1]{fontenc}
\usepackage{lmodern}
\usepackage{amsmath,amssymb,amsthm,mathtools}
\usepackage{microtype}
\usepackage{enumitem}
\usepackage{array}
\usepackage{booktabs}
\usepackage{float}
\usepackage{tikz}
\usetikzlibrary{arrows.meta,positioning,calc,fit,decorations.pathreplacing}
\usepackage{hyperref}
\hypersetup{hidelinks}
\journal{}

\newtheorem{theorem}{Theorem}
\newtheorem{lemma}[theorem]{Lemma}
\newtheorem{proposition}[theorem]{Proposition}

\theoremstyle{definition}

\newtheorem{problem}[theorem]{Problem}
\theoremstyle{remark}
\newtheorem{remark}[theorem]{Remark}

\newcommand{\defi}{\operatorname{def}}
\newcommand{\CONF}{\textnormal{\textsc{Conformability}}}
\newcommand{\PTPthree}{\textnormal{\textsc{Perfect Triangle Packing}}\ensuremath{_{\omega=3}}}
\newcommand{\ov}{\overline}
\newcommand{\dunion}{\mathbin{\dot\cup}}

\begin{document}

\begin{frontmatter}

\title{Conformability is NP-complete, even on connected regular graphs}

\author[inst1,inst2]{J\'ozsef Pint\'er\corref{cor1}}
\ead{pinterj@edu.bme.hu}
\cortext[cor1]{Corresponding author.}

\affiliation[inst1]{organization={Department of Stochastics, Institute of Mathematics,
  Budapest University of Technology and Economics},
  addressline={Egry J\'ozsef utca 1},
  postcode={1111},
  city={Budapest},
  country={Hungary}}

\affiliation[inst2]{%
  organization={HUN-REN--BME Stochastics Research Group},
  addressline={Egry J\'ozsef utca 1},
  postcode={1111},
  city={Budapest},
  country={Hungary}}

\begin{abstract}
A graph $G$ is conformable if it admits a proper $(\Delta(G)+1)$-coloring in which, among the $\Delta(G)+1$ color classes including the empty ones, at most
\(\sum_{v\in V(G)}(\Delta(G)-d_G(v))\)
have parity different from that of $|V(G)|$. The complexity of deciding conformability was left open in recent work, and positive results for several graph classes had suggested that the problem might be polynomial-time solvable. We settle the general problem by proving that \textsc{Conformability} is NP-complete. Hardness holds even for connected regular graphs $G$ of odd order with independence number $\alpha(G)=3$ and maximum degree $\Delta(G)\ge |V(G)|/2$. In particular, NP-completeness persists when every color class is forced to have the parity of the order. The reduction starts from perfect triangle packing in graphs of clique number three, regularizes the source graph while preserving the relevant triangle packings, and then takes the complement. In the complement, conformable color classes correspond to odd cliques of the regularized graph; $K_4$-freeness restricts these cliques to singletons or triangles, and the number of available colors forces exactly the required number of disjoint triangles.
\end{abstract}

\begin{keyword}
conformable coloring \sep total coloring \sep NP-completeness \sep perfect triangle packing \sep regular graph \sep graph coloring
\end{keyword}

\end{frontmatter}

\section{Introduction}

All graphs in this paper are finite, simple and undirected.  For a graph $G$, let $\Delta(G)$ be its maximum degree and define its \emph{deficiency} by
\[
\defi(G)=\sum_{v\in V(G)}(\Delta(G)-d_G(v)).
\]
A proper vertex coloring $\varphi:V(G)\to\{1,\ldots,\Delta(G)+1\}$ is \emph{conformable} if at most $\defi(G)$ color classes, including empty classes, have parity different from that of $|V(G)|$.  The corresponding decision problem is:

\begin{problem}[\CONF{}]\leavevmode\par\smallskip
\begin{description}[
  style=sameline,
  font=\normalfont\bfseries,
  widest=Question:,
  leftmargin=!,
  labelsep=.7em,
  topsep=0pt,
  itemsep=.25em,
  parsep=0pt
]
\item[Instance:] A connected graph \(G\).
\item[Question:] Is \(G\) conformable?
\end{description}
\end{problem}

The connected-input convention follows the recent formulation of the problem~\cite{FariaNigroPreissmannSasaki2023}.  This convention matters: conformability depends on the global parameters $\Delta(G)$ and $|V(G)|$, so the problem does not decompose over connected components.  Our reduction produces connected instances.

Conformable colorings were introduced by Chetwynd and Hilton~\cite{ChetwyndHilton1988} in the study of the Total Coloring Conjecture.  A total coloring of $G$ assigns colors to vertices and edges so that adjacent or incident elements receive different colors.  Graphs with total chromatic number $\chi''(G)=\Delta(G)+1$ are called Type~1.  Chetwynd and Hilton observed that every Type~1 graph is conformable.  Thus conformability is a structural necessary condition for Type~1 total colorability, but it is defined purely as a vertex-coloring property.  Although determining the total chromatic number is NP-hard in general~\cite{SanchezArroyo1989}, this does not determine the complexity of the vertex-coloring obstruction captured by conformability.

Conformability also has a broader role in total-coloring theory.  It appears in the Conformability Conjecture and in related work on totally critical graphs and the Overfull Conjecture~\cite{HamiltonHiltonHind1999,HiltonHolroydZhao2001}.  High-degree regular graphs are a classical setting in which conformability-type conditions interact with Type~1 total colorability: Chetwynd, Hilton and Zhao obtained high-minimum-degree total-coloring results and a necessary-and-sufficient condition for certain odd-order regular graphs to be Type~1, and Chew later improved the high-degree odd-order regular case~\cite{ChetwyndHiltonZhao1991,Chew1996}.  Thus the dense regular regime reached by our reduction is one of the regimes in which conformability has historically been closest to total coloring.

The known evidence before this paper was largely positive.  Chetwynd and Hilton classified cycles and complete graphs~\cite{ChetwyndHilton1988}; Hilton and Hind proved that every non-conformable graph $G$ with $\Delta(G)\ge2$ satisfies $\defi(G)\le\Delta(G)-2$, with the sharper bound $\defi(G)\le\Delta(G)-3$ when $|V(G)|$ is even and $\Delta(G)$ is odd~\cite[Lemma~9]{HiltonHind2002}; powers of cycles were classified with respect to conformability by Zorzi, de Figueiredo, Machado, Zatesko and Souza~\cite{ZorziFigueiredoMachadoZateskoSouza2022}; and regular line graphs have also been studied from the conformability viewpoint~\cite{FariaNigroSasakiLineGraphs2023}.

Algorithmic evidence also pointed toward tractability.  Equitable coloring, guaranteed by the theorem of Hajnal and Szemer\'edi~\cite{HajnalSzemeredi1970} and constructible in polynomial time~\cite{KiersteadKostochkaMydlarzSzemeredi2010}, has been used to construct large conformable families~\cite{FariaNigroPreissmannSasaki2023}.  Recent work on circulant graphs formulated \CONF{} explicitly as a decision problem, stated that its complexity was unknown, developed clues toward tractability through equitable-coloring constructions and infinite positive classes, and concluded by conjecturing that conformability is polynomial-time solvable~\cite{FariaNigroPreissmannSasaki2023}.  A subsequent paper proved polynomial-time algorithms for regular bipartite graphs and for subcubic graphs, while again leaving the general problem open~\cite{FariaNigroSasaki2025}.  This separation from total coloring is meaningful: total coloring is NP-hard even on $k$-regular bipartite graphs for each fixed $k\ge3$~\cite{McDiarmidSanchezArroyo1994}, whereas ordinary conformability is polynomial-time solvable on regular bipartite graphs~\cite{FariaNigroSasaki2025}.

We distinguish this problem from a recent strengthening.  Faria, Nigro and Sasaki introduced strong conformable colorings, which are conformable colorings satisfying additional conditions designed to characterize extendability to a $(\Delta+1)$-total coloring~\cite{FariaNigroSasakiStrong2025}.  Hardness for that strengthened problem does not settle \CONF{} itself: ordinary conformability does not ask whether the vertex coloring extends to a total coloring.

Our result settles the original decision problem.

\begin{theorem}\label{thm:main}
The decision problem \CONF{} is NP-complete.  Moreover, it remains NP-complete when restricted to connected regular graphs $G$ of odd order satisfying $\alpha(G)=3$ and $\Delta(G)\ge |V(G)|/2$.
\end{theorem}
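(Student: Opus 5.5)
Membership in NP is immediate: a certificate is a map $\varphi:V(G)\to\{1,\ldots,\Delta(G)+1\}$. We check in polynomial time that it is proper, compute $\defi(G)$, and count the color classes (empty ones included) whose parity differs from that of $|V(G)|$. The work is in hardness. We reduce from \PTPthree{}: given a graph $H$ with $\omega(H)=3$ on $3q$ vertices, decide whether $V(H)$ can be partitioned into $q$ vertex-disjoint triangles. This problem is NP-complete, since the standard reductions for partition into triangles produce $K_4$-free graphs.

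\textbf{Step 1 (regularization).} We build a $K_4$-free graph $R$ that is $r$-regular on $N$ vertices, with $N$ odd. It contains $H$, and it has a designated family of triangles covering $V(R)\setminus V(H)$ such that the following holds. For a suitable target $t$, $R$ contains $t$ pairwise disjoint triangles if and only if $H$ has a perfect triangle packing. We attach to $H$ gadgets made of disjoint copies of a $K_4$-free graph that is tiled by triangles. The gadgets raise every degree of $H$ to a common value $r$ and top up the gadget vertices themselves. Gadget edges leaving $V(H)$ must go to vertices whose common neighbourhoods contain no edges, so that no new triangle meets $V(H)$ and no $K_4$ is created. Then every triangle meeting $V(H)$ lies inside $H$. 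The gadget part has a canonical triangle tiling but can never help cover $V(H)$. A parity bookkeeping then fixes $t$ so that $t$ disjoint triangles force $V(H)$ to be exactly covered. The count works out as $t=(N-s)/3$, where $s$ is a prescribed number of leftover singleton vertices chosen to make the counting in Step 2 exact.

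\textbf{Step 2 (complement).} Let $G=\overline{R}$. Then $G$ is $(N-1-r)$-regular, so $\defi(G)=0$. The order $N$ is odd, so conformability means every one of the $\Delta(G)+1=N-r$ color classes, empty ones included, has odd size. Color classes of $G$ are cliques of $R$, so each has size $1$ or $3$ because $R$ is $K_4$-free; in particular no class is empty. Hence $\alpha(G)=\omega(R)=3$. If $a$ classes are triangles and $b$ are singletons, then $a+b=N-r$ and $3a+b=N$, which gives $a=r/2$. So $G$ is conformable exactly when $R$ has $r/2$ disjoint triangles. We therefore arrange $t=r/2$ in Step 1, which requires $r$ to be even. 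The condition $\Delta(G)\ge N/2$ becomes $r\le (N-1)/2$, which holds when we pad with enough gadget vertices to keep $R$ sparse relative to $N$. Connectivity of $G$ is automatic: the complement of a graph of degree less than $(N-1)/2$ is connected, since any two vertices have a common neighbour in $G$.

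\textbf{Main obstacle.} The main difficulty is Step 1. We must achieve simultaneously $r$-regularity with $r$ even, $K_4$-freeness, $N$ odd, $N\ge 2r+1$, the exact identity $t=r/2$, and the property that no triangle uses both gadget and $H$ vertices. The plan is to take many copies of a fixed triangle-tileable $K_4$-free gadget, for instance blow-ups of $C_5$-like structures or disjoint $K_3$'s wired by a triangle-free bipartite layer. The padding is then tuned in two ways: the number of gadget triangles controls $t$, and the number of isolated-in-tiling vertices controls $s$. These choices must satisfy $N=3t+s$ with $t=r/2$ exactly. Enlarging $r$ via additional triangle-free regular bipartite padding lets both equations be met with polynomial size.
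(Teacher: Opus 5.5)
Your NP-membership argument and your Step~2 agree with the paper. For $R$ regular, $K_4$-free and of odd order, every color class of a conformable coloring of $\overline{R}$ is a clique of $R$ of size $1$ or $3$, and $a+b=N-r$, $3a+b=N$ force exactly $r/2$ triangles.

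The gap is Step~1, which carries all the difficulty and which you leave as a plan. You never construct, or prove the existence of, a graph $R\supseteq H$ with all of the following properties:
\begin{itemize}
\item $R$ is $r$-regular with $r=2t$;
\item $R$ has odd order and $\Delta(\overline{R})\ge N/2$ (for odd $N$ this needs $r\le (N-3)/2$, i.e.\ $N\ge 2r+3$, not $r\le (N-1)/2$ or $N\ge 2r+1$);
\item $R$ is $K_4$-free;
\item no triangle of $R$ meets both $V(H)$ and the new vertices.
\end{itemize}

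Your description is also inconsistent on a point that matters. Step~1 says the new vertices are tiled by designated triangles. In that case the only threshold that detects a perfect packing of $H$ is $t=N/3$. This forces $r=2N/3>(N-3)/2$, and by Tur\'an's theorem it would force $R$ to be complete tripartite.

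So, since $N=3t+s\ge 4t+3$, at least $t+3$ vertices must be leftover singletons that can never enlarge a triangle packing. They must still be raised to degree $r$, together with the vertices of $H$, whose deficits $r-d_H(v)$ differ.

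Your safety condition does not achieve this either. For an edge $uv$ with $u\in V(H)$ you need $N_R(u)\cap N_R(v)=\emptyset$. An edgeless common neighbourhood only excludes $K_4$'s, not new triangles. Suggestions such as blow-ups of $C_5$-like structures or ``triangle-free regular bipartite padding'' are not shown to meet these constraints or to control the parity of $N$.

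The paper supplies exactly this missing piece, and your hint of ``disjoint $K_3$'s wired by a triangle-free bipartite layer'' points the right way.
\begin{itemize}
\item It first adds $q$ disjoint copies of $K_3$ to the source graph. This gives $X$ with $|V(X)|=3p$ and $d:=2p\ge\Delta(X)$, so $d/2$ is exactly the number of required triangles.
\item It then regularizes without creating any triangle. Each original vertex (center) $v$ receives $d-d_X(v)$ pendant ports.
\item It adds $h\in\{8,9\}$ dummy centers with $d$ ports each. Changing $h$ by one changes $N$ by the odd number $d+1$, which fixes the parity. Also $h\ge 8$ gives $L\ge 4d$, where $2L$ is the number of ports.
\item The ports are split into halves $A,B$. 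Then $d-1$ edge-disjoint perfect matchings are added in the bipartite graph on $(A,B)$ that forbids pairs of ports of the same center. These matchings exist by Hall's theorem, since the minimum degree stays at least $L-2d+2\ge L/2$.
\end{itemize}
Bipartiteness rules out triangles on ports only. The forbidden pairs rule out center--port--port triangles. Uniqueness of each port's center rules out triangles with two centers. Hence the triangle set is preserved exactly, $K_4$-freeness follows, and $N\ge 2d+2$.

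Without this lemma, or an equivalent construction with proof, your reduction is not defined, so NP-hardness is not established.
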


Our contribution is fourfold.
\begin{itemize}[leftmargin=2em]
\item We prove \CONF{} NP-complete, settling the decision problem.
\item We show that hardness already holds in the zero-deficiency regime of odd-order regular graphs.
\item We obtain dense connected regular hard instances with independence number exactly three.
\item We introduce a triangle-preserving regularization lemma for $K_4$-free graphs.
\end{itemize}

The restriction in Theorem~\ref{thm:main} is strong.  If $G$ is regular, then $\defi(G)=0$.  Hence, when $|V(G)|$ is odd, a conformable coloring is exactly a proper $(\Delta(G)+1)$-coloring in which every one of the $\Delta(G)+1$ color classes is odd.  Empty color classes are excluded automatically.  Hardness thus appears already at zero deficiency.

This also sharpens the contrast with the known positive results. The known polynomial-time cases include sparse graphs, such as subcubic graphs, as well as bipartite regular graphs. Our hard instances lie on the opposite side in two respects. First, they are dense. In the reduction we construct a regular auxiliary graph $H$ on $N$ vertices, say of degree $r$, and arrange that $N-1-r\ge N/2$. The output graph is $G=\overline H$, so $G$ is regular of degree $N-1-r$, and hence has degree at least half its order. Second, the hard instances have bounded independence number, namely $\alpha(G)=3$; in contrast, every bipartite graph of order $N$ has an independent set of size at least $N/2$. Thus regularity alone does not determine which cases are polynomial-time and which are NP-complete; density and small independence number are also relevant.

The core of the reduction is simple. Starting from a perfect triangle-packing instance in a $K_4$-free graph, we construct a regular $K_4$-free graph $H$ without creating any new triangle, and then set $G=\ov H$.  Color classes of $G$ are independent sets of $G$, equivalently cliques of $H$.  Since $H$ is $K_4$-free, every odd nonempty color class in a conformable coloring of $G$ has size either $1$ or $3$.  A counting identity then forces exactly the desired number of size-three classes, and these classes are precisely disjoint triangles of $H$.

The paper is organized as follows.  Section~\ref{sec:background} fixes notation and states the perfect triangle-packing problem.  Section~\ref{sec:padding} performs a small padding step that aligns the number of required triangles with the later degree parameter.  Section~\ref{sec:regularization} proves the triangle-preserving regularization lemma.  Section~\ref{sec:np-proof} completes the complement reduction and proves Theorem~\ref{thm:main}.  Section~\ref{sec:conclusion} discusses consequences and open directions.

\section{Background and notation}\label{sec:background}

We write $\omega(G)$ for the clique number and $\alpha(G)$ for the independence number of $G$.  The complement of $G$ is denoted by $\ov G$.  Unless otherwise stated, a color class may be empty; empty color classes are included when testing conformability.

The following elementary observation is the bridge between conformability and triangle packing.

\begin{proposition}\label{prop:regular-odd}
Let $G$ be a regular graph of odd order $N$.  Then $G$ is conformable if and only if $G$ has a proper $(\Delta(G)+1)$-coloring in which all $\Delta(G)+1$ color classes are nonempty and odd.
\end{proposition}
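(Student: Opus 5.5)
The argument is a direct unwinding of the definition of conformability in the regular, odd-order case. Since $G$ is regular, every vertex has degree $\Delta(G)$, so each term $\Delta(G)-d_G(v)$ vanishes and $\defi(G)=0$. The conformability condition therefore says that \emph{no} color class, including empty ones, may have parity different from that of $|V(G)|=N$. As $N$ is odd, this means every one of the $\Delta(G)+1$ color classes must have odd size.

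For the forward direction, I take a conformable coloring $\varphi:V(G)\to\{1,\ldots,\Delta(G)+1\}$. By the previous paragraph, each of the $\Delta(G)+1$ color classes has odd cardinality. Since $0$ is even, an empty color class would violate this, so every class is nonempty. Hence $\varphi$ is a proper $(\Delta(G)+1)$-coloring with all classes nonempty and odd.

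For the converse, I take a proper $(\Delta(G)+1)$-coloring whose $\Delta(G)+1$ classes are all nonempty and odd. Then the number of classes whose parity differs from that of $N$ is $0$, which is at most $\defi(G)=0$, so the coloring is conformable.

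There is no real obstacle here. The only points needing care are that empty classes count in the definition, and that "nonempty" in the statement is automatic once all classes are required to be odd. I would include "nonempty" only for emphasis.
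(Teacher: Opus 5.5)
Your proof is correct and matches the paper's argument: regularity gives $\defi(G)=0$, so with $N$ odd every one of the $\Delta(G)+1$ classes, empty ones included, must be odd and hence nonempty. The converse is immediate because no class is parity-defective.
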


\begin{proof}
Since $G$ is regular, $\defi(G)=0$.  Therefore no color class may have parity different from $N$.  Since $N$ is odd, every color class must be odd, and in particular no color class is empty.  Conversely, any proper $(\Delta(G)+1)$-coloring whose color classes are all odd has no parity-defective class and is therefore conformable.
\end{proof}

We reduce from the following perfect-packing problem, also called \emph{Triangle Cover} or \emph{Partition Into Triangles} in the literature.

\begin{problem}[\PTPthree{}]\label{prob:ptp}
\leavevmode\par\smallskip
\begin{description}[
  style=sameline,
  font=\normalfont\bfseries,
  widest=Question:,
  leftmargin=!,
  labelsep=.7em,
  topsep=0pt,
  itemsep=.25em,
  parsep=0pt
]
\item[Instance:] A graph \(X_0\) with \(\omega(X_0)=3\) and \(|V(X_0)|=3p_0\).
\item[Question:] Can \(V(X_0)\) be partitioned into \(p_0\) triangles?
\end{description}
\end{problem}

\begin{theorem}[Perfect triangle packing at clique number three~\cite{GuruswamiRanganChangChangWong1998}]\label{thm:ptp-source}
The problem \PTPthree{} is NP-complete.
\end{theorem}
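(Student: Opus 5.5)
Membership in NP is immediate: a certificate is a list of $p_0$ vertex triples, and we check in polynomial time that they partition $V(X_0)$ and that each induces a triangle. For hardness I would follow the classical Garey--Johnson reduction for \textsc{Partition Into Triangles}, starting from \textsc{Exact Cover by 3-Sets} (X3C). I would then check that the graph it produces has clique number exactly three. This is the form in which the result is attributed to~\cite{GuruswamiRanganChangChangWong1998}.

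Let an X3C instance consist of a ground set $U$ with $|U|=3q$ and a family $\mathcal C=\{C_1,\dots,C_m\}$ of $3$-subsets of $U$. The vertex set of $X_0$ is $U$ together with nine private vertices $a^j_1,\dots,a^j_9$ for each $C_j=\{w,x,y\}$. On these I place the standard gadget. It consists of three outer triangles $\{w,a^j_1,a^j_2\}$, $\{x,a^j_4,a^j_5\}$ and $\{y,a^j_7,a^j_8\}$, and three private vertices $a^j_3,a^j_6,a^j_9$. These are joined so that the nine private vertices together with $w,x,y$ admit exactly two relevant local patterns:
\begin{itemize}
\item in the ``chosen'' pattern, the three outer triangles and one inner triangle $\{a^j_3,a^j_6,a^j_9\}$ cover all twelve vertices;
\item in the ``unchosen'' pattern, three mixed triangles such as $\{a^j_1,a^j_2,a^j_3\}$, $\{a^j_4,a^j_5,a^j_6\}$ and $\{a^j_7,a^j_8,a^j_9\}$ cover only the nine private vertices.
\end{itemize}
Then $|V(X_0)|=3q+9m$, so $p_0=q+3m$. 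The correspondence is as follows. An exact cover gives a triangle partition by using the chosen pattern on the $q$ selected sets and the unchosen pattern on the others. Conversely, in any triangle partition each gadget is either fully chosen or unchosen, and each element vertex lies in exactly one chosen gadget, so the chosen sets form an exact cover.

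The substantive step, and the one I expect to be the main obstacle, is the local case analysis. First, every triangle of $X_0$ must be one of the listed gadget triangles. This holds because element vertices are pairwise nonadjacent and their neighbourhoods in distinct gadgets are disjoint. Second, within one gadget, covering the private vertices forces one of the two patterns. I would prove this by looking at the vertex $a^j_3$ and, symmetrically, $a^j_6$ and $a^j_9$. The triangle covering $a^j_3$ is either inner or mixed, and either choice propagates around the gadget: if it is inner, then $a^j_1,a^j_2$ must go with $w$, and similarly for the other two branches.

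For clique number three, a $K_4$ would require four pairwise adjacent vertices. No two element vertices are adjacent, and any two gadget triangles share at most one vertex, so no edge lies in two triangles whose third vertices are adjacent. Hence $X_0$ is $K_4$-free. Each gadget contains a triangle, so $\omega(X_0)=3$. The construction is polynomial, which completes the NP-completeness of \PTPthree{}.
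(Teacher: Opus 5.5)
Your route is the one the paper relies on. The paper gives no proof of its own: it cites Guruswami et al.\ and the Garey--Johnson reduction from X3C (Theorem~3.7), whose $K_4$-freeness is recorded under GT15. Your reduction, the count $p_0=q+3m$, and your propagation argument from $a^j_3$ are correct.

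You should, however, state the edge set explicitly instead of saying the vertices are ``joined so that'' the two patterns exist. Each gadget consists of exactly the $18$ edges of the seven triangles you list (three outer, three mixed, one inner), and there are no edges between private vertices of different gadgets. Both your claim that every triangle is a listed one and the $K_4$ check depend on there being no further edges; an extra edge $wa^j_3$, for instance, would create a $K_4$.

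The one step specific to clique number three is justified by a false statement. It is not true that any two gadget triangles share at most one vertex: $\{w,a^j_1,a^j_2\}$ and $\{a^j_1,a^j_2,a^j_3\}$ share the edge $a^j_1a^j_2$, and likewise for $a^j_4a^j_5$ and $a^j_7a^j_8$. So your inference does not go through as written, although the conclusion is true.

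The repair is short. These three edges per gadget are the only edges lying in two triangles, and in each case the two third vertices (such as $w$ and $a^j_3$) are nonadjacent, so no edge extends to a $K_4$. Equivalently, no vertex has a triangle in its neighbourhood:
\begin{itemize}
\item the neighbourhood of an element vertex is a disjoint union of edges, one per gadget containing it;
\item $N(a^j_1)=\{w,a^j_2,a^j_3\}$ lacks the edge $wa^j_3$;
\item $N(a^j_3)=\{a^j_1,a^j_2,a^j_6,a^j_9\}$ induces only the matching $a^j_1a^j_2$, $a^j_6a^j_9$;
\item the remaining private vertices are handled symmetrically.
\end{itemize}

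Finally, $\omega(X_0)=3$ requires $m\ge 1$. X3C instances with $m=0$ would give a triangle-free graph, so map them to a fixed yes- or no-instance.
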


Guruswami et al.~\cite{GuruswamiRanganChangChangWong1998} define triangle cover as the existence of a perfect triangle packing and explicitly record NP-completeness on graphs of clique number $3$.  Garey and Johnson list the classical source problem as GT11; their Chapter~3 reduction is Theorem~3.7, and the comment under GT15 records that this construction contains no $K_4$~\cite[Theorem~3.7 and pp.~192--193]{GareyJohnson1979}.  We use this exact restricted formulation.  The condition $|V(X_0)|=3p_0$ is part of the perfect-packing problem itself, so no divisibility padding is needed.  Moreover, $\omega(X_0)=3$ is equivalent to $K_4$-freeness together with the existence of a triangle; in particular $p_0\ge1$.

\section{Padding the source instance}\label{sec:padding}

Let $X_0$ be an instance of Problem~\ref{prob:ptp}, so $|V(X_0)|=3p_0$.  The padding step increases the target number $p$ until the even degree parameter $d:=2p$ dominates the maximum degree of the padded graph, while preserving the yes/no answer.  Later, the identity $p=d/2$ is exactly what the complement counting argument forces.

Choose an integer $q\ge 1$ such that
\[
2(p_0+q)\ge \Delta(X_0).
\]
For instance, $q=\max\{1,\lceil \Delta(X_0)/2\rceil-p_0\}$ suffices, so this is a polynomial-time choice.  Define
\[
X:=X_0\dunion qK_3,\qquad p:=p_0+q,\qquad d:=2p.
\]
Then $|V(X)|=3p$, $d$ is even, and $X$ is still $K_4$-free.  Furthermore,
\[
\Delta(X)=\max\{\Delta(X_0),2\}.
\]
The choice of $q$ gives $d\ge\Delta(X_0)$, while $q\ge1$ gives $d\ge2$; hence $d\ge\Delta(X)$.

\begin{lemma}\label{lem:padding}
The graph $X_0$ has a perfect triangle packing if and only if $X$ contains $p$ pairwise vertex-disjoint triangles.
\end{lemma}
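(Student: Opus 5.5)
The forward direction is immediate. If $V(X_0)$ is partitioned into $p_0$ triangles, then we add the $q$ triangles forming the components of $qK_3$. Together these give $p_0+q=p$ pairwise vertex-disjoint triangles in $X$, since $X$ is the disjoint union $X_0\dunion qK_3$.

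For the converse, we use a counting argument. Suppose $X$ contains $p$ pairwise vertex-disjoint triangles. These cover $3p=|V(X)|$ vertices, so they form a perfect triangle packing of $X$, that is, a partition of $V(X)$ into triangles. The key structural point is that every triangle of $X$ lies entirely inside one connected component of $X$. A triangle is connected, and $X$ has no edges between $X_0$ and the added copies of $K_3$. Hence each triangle of the packing is either a triangle of $X_0$ or one of the added $K_3$ components; it cannot mix vertices from the two parts.

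It follows that the triangles contained in $V(X_0)$ partition $V(X_0)$. Every vertex of $X_0$ is covered by exactly one triangle of the packing, and that triangle must lie in $X_0$. Since $|V(X_0)|=3p_0$, there are exactly $p_0$ such triangles, and they form a perfect triangle packing of $X_0$. This also shows that the packing uses each added $K_3$ exactly once, although the argument does not need this.

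There is no real obstacle here. The only point requiring care is the observation that triangles cannot cross between components. This is what lets the restriction of a perfect packing of $X$ to $V(X_0)$ remain a perfect packing of $X_0$, and it relies only on the disjointness in $X_0\dunion qK_3$, not on any property of $X_0$ such as $K_4$-freeness.
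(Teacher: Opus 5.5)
Your proof is correct and follows essentially the same route as the paper: the forward direction adds the $q$ copies of $K_3$, and the converse combines the fact that triangles cannot cross components with vertex counting. The paper phrases the converse as the bounds of at most $q$ and at most $p_0$ disjoint triangles in the two parts both being tight, while you restrict a partition of $V(X)$ to $V(X_0)$; you also state explicitly the no-crossing observation that the paper leaves implicit.
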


\begin{proof}
If $X_0$ has $p_0$ pairwise vertex-disjoint triangles, then together with the $q$ added $K_3$ components we obtain $p_0+q=p$ pairwise vertex-disjoint triangles in $X$.

Conversely, suppose $X$ has $p=p_0+q$ pairwise vertex-disjoint triangles.  The $q$ added components together contain at most $q$ vertex-disjoint triangles, and $X_0$ has only $3p_0$ vertices, so it contributes at most $p_0$ vertex-disjoint triangles.  Both upper bounds must therefore be tight.  Hence $X_0$ contributes exactly $p_0$ vertex-disjoint triangles and has a perfect triangle packing.
\end{proof}

\section{Regularizing without creating triangles}\label{sec:regularization}

The technical goal is to replace the padded $K_4$-free graph $X$ by a regular $K_4$-free graph $H$ while preserving its triangles exactly.  The three requirements are all necessary for the complement step: regularity makes the deficiency zero, odd order makes conformable color classes odd, and $K_4$-freeness ensures that odd cliques in $H$ have size only $1$ or $3$.  The intermediate graph $H$ need not be connected; only the final instance $G=\overline H$ must be connected, and this will follow from its minimum degree.

We first isolate the matching fact used in the construction.

\begin{lemma}\label{lem:half-dense-bipartite}
Let $Q$ be a balanced bipartite graph with bipartition $(A,B)$, where $|A|=|B|=L$.  If $\delta(Q)\ge L/2$, then $Q$ has a perfect matching.
\end{lemma}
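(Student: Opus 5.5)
We verify Hall's condition directly: for every $S\subseteq A$, we show $|N_Q(S)|\ge |S|$. By K\"onig--Hall this gives a matching saturating $A$, and since $|A|=|B|=L$ that matching is perfect. The minimum degree hypothesis applies on both sides, so the argument splits according to whether $S$ is small or large.

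\emph{Small $S$.} If $1\le |S|\le L/2$, pick any $a\in S$. Then
\[
|N_Q(S)|\ge d_Q(a)\ge \delta(Q)\ge L/2\ge |S|.
\]
The case $S=\emptyset$ is trivial.

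\emph{Large $S$.} Suppose $|S|>L/2$. We claim $N_Q(S)=B$, which gives $|N_Q(S)|=L\ge |S|$. Take any $b\in B$. Since $Q$ is bipartite, all neighbours of $b$ lie in $A$, so $|N_Q(b)\cap A|=d_Q(b)\ge L/2$. If $b$ had no neighbour in $S$, then $N_Q(b)\subseteq A\setminus S$, and
\[
|A\setminus S| < L - L/2 = L/2 \le d_Q(b),
\]
a contradiction. So every $b\in B$ has a neighbour in $S$, and $N_Q(S)=B$.

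The only delicate point is the boundary between the two cases when $L$ is odd or $|S|=L/2$ exactly. Splitting at ``$|S|\le L/2$'' versus ``$|S|>L/2$'' makes the first case use $\delta(Q)\ge L/2\ge|S|$ and the second use the strict inequality $|A\setminus S|<L/2$, so no rounding issue arises. If $L=0$ the statement is vacuous. An alternative route is to cite the bipartite Dirac/Moon--Moser condition, but the direct Hall verification is self-contained and short.
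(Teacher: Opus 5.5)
Your proof is correct and follows the paper's argument: you verify Hall's condition by splitting at $|S|\le L/2$, where a single vertex of $S$ already has at least $L/2\ge|S|$ neighbours, versus $|S|>L/2$, where a vertex $b\in B\setminus N(S)$ would need its at least $L/2$ neighbours to lie in $A\setminus S$, which has fewer than $L/2$ vertices.
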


\begin{proof}
We verify Hall's condition.  Let $S\subseteq A$.  If $S=\emptyset$, the condition is trivial.  If $0<|S|\le L/2$, then any vertex of $S$ has at least $L/2$ neighbors, so $|N(S)|\ge L/2\ge |S|$.  If $|S|>L/2$ and $N(S)\ne B$, choose $b\in B\setminus N(S)$.  Then all neighbors of $b$ lie in $A\setminus S$, so
\[
 d_Q(b)\le |A\setminus S|<L/2,
\]
contradicting $\delta(Q)\ge L/2$.  Hence $N(S)=B$, and again $|N(S)|=L\ge |S|$.  Hall's theorem gives a perfect matching.
\end{proof}

We now carry out the regularization step.  The point is to make all degrees equal to $d$ without introducing any new triangles. The construction keeps the original vertices of $X$ as \emph{centers}.  Whenever a center has degree below $d$, we attach new degree-one vertices, called \emph{ports}, to supply its missing degree.  We then add a small number of dummy centers with their own ports, both to provide enough flexibility for the matching construction and to fix the parity of the final order.

The remaining task is to raise every port from degree $1$ to degree $d$.  This is done by splitting the ports into two equal parts and adding $d-1$ edge-disjoint perfect matchings between them, while forbidding edges between ports belonging to the same center.  These forbidden pairs are precisely what prevent the creation of a triangle using one center and two of its ports; the bipartite nature of the added port graph prevents triangles using only ports.  Thus the construction regularizes $X$ while preserving its triangle set exactly.

\begin{lemma}[Triangle-preserving regularization]\label{lem:regularization}
Let $X$ be a $K_4$-free graph with $|V(X)|=3p$, and let $d=2p\ge \Delta(X)$.  One can construct, in polynomial time, a simple $d$-regular $K_4$-free graph $H$ of odd order such that, under the natural embedding $V(X)\subseteq V(H)$, the triangle vertex-sets of $H$ are exactly those of $X$.  Moreover, $H$ satisfies $|V(H)|\ge 2d+2$.  Consequently,
\begin{align*}
&X\text{ has }p\text{ pairwise vertex-disjoint triangles}\\
&\hspace{2.5cm}\Longleftrightarrow
H\text{ has }d/2\text{ pairwise vertex-disjoint triangles}.
\end{align*}
\end{lemma}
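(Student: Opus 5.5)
Following the outline given just before the statement, we keep $V(X)$ as centers, attach ports, add dummy centers, and then complete the ports to degree $d$ by $d-1$ edge-disjoint perfect matchings in a bipartite port graph.

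\emph{Construction and counting.} For each center $v\in V(X)$ we attach $d-d_X(v)$ new ports, each joined only to $v$. We then add $s$ dummy centers; each dummy is an isolated center carrying $d$ ports of its own. Every center now has degree exactly $d$. Let $P$ be the total number of ports:
\[
P=\sum_{v}(d-d_X(v))+sd=3pd-2|E(X)|+sd .
\]
Since $d$ is even, $P$ is even for every $s$. The order is $|V(H)|=3p+s+P$, and $3p+P$ has the parity of $p$. We choose $s$ so that $s\equiv p+1\pmod 2$, which makes the order odd. We also take $s$ large enough, say $s\ge 4$ or larger as the matching step requires.

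\emph{Splitting the ports.} We split the ports into halves $A$ and $B$ with $|A|=|B|=L=P/2$. Each center's ports are spread as evenly as possible between the two sides, so each center owns at most $\lceil d/2\rceil=p$ ports on each side; this holds because every center owns at most $d$ ports. To make the halves equal, we alternate the sides on which odd leftovers fall. Let $Q_0$ be the bipartite graph on $(A,B)$ in which $a\in A$ and $b\in B$ are adjacent exactly when they belong to different centers. Each vertex of $Q_0$ has degree at least $L-p$.

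\emph{Greedy matchings.} We extract perfect matchings $M_1,\dots,M_{d-1}$ one at a time using Lemma~\ref{lem:half-dense-bipartite}. Before step $i$ every vertex has lost $i-1$ edges, so the remaining graph has minimum degree at least $L-p-(d-2)$. We need this to be at least $L/2$, i.e.\ $L\ge 2(p+d-2)=6p-4$. This is where the dummy centers are used: with $s\ge 6$ dummies, $L\ge sd/2= sp\ge 6p$, which suffices. Taking $s\in\{6,7\}$ with the correct parity gives the construction in polynomial time. After the $d-1$ matchings every port has degree $1+(d-1)=d$, so $H$ is simple and $d$-regular. Also $|V(H)|\ge P\ge sd\ge 2d+2$.

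\emph{Triangles.} We now show that no triangle of $H$ meets a port. The port subgraph is bipartite, so it contains no triangle. Two centers that are adjacent in $H$ are adjacent in $X$, and a port has exactly one center neighbor. So a triangle containing a port must consist of one center $c$ and two ports. Both ports would then be adjacent to $c$, so both belong to $c$; but ports of the same center are never joined, so no such triangle exists. Hence the triangles of $H$ are exactly the triangles of $X$. The same argument shows $H$ is $K_4$-free: a $K_4$ contains triangles through each of its vertices, so it cannot contain a port, and $H[V(X)]=X$ is $K_4$-free.

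The displayed equivalence follows because $d/2=p$ and the triangle sets coincide. The main point needing care is the degree bookkeeping in the greedy matching step, together with balancing $|A|=|B|$ while capping each center's ports per side. If the per-side cap is slightly off because of rounding, we increase $s$ by $2$, which preserves parity and gives extra slack.
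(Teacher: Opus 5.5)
Your proof is correct and follows the paper's construction essentially step for step. It uses centers with attached ports, dummy centers to fix the parity and create slack, a bipartite port graph that forbids same-center pairs, $d-1$ greedy edge-disjoint perfect matchings via Lemma~\ref{lem:half-dense-bipartite}, and the same case analysis for triangles and $K_4$'s. The only difference is cosmetic: you split each center's ports evenly across $A$ and $B$, which gives $\delta(Q_0)\ge L-p$ and needs only $s\in\{6,7\}$ dummies, whereas the paper splits the ports arbitrarily, settles for $\delta(Q)\ge L-d$, and compensates with $h\in\{8,9\}$.
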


\begin{proof}
Since $d=2p$, the degree parameter $d$ is even.  We keep all vertices and edges of $X$.  The vertices inherited from $X$ are called \emph{centers}.  For each center $v\in V(X)$, add $d-d_X(v)$ new vertices, called \emph{ports of $v$}, and join each of them only to $v$ for the moment.  Since $d\ge\Delta(X)$, this number is nonnegative.  At this stage every original center has degree $d$, while each port has degree $1$.

We next add dummy centers.  A dummy center has no edges to other centers and receives exactly $d$ ports adjacent to it.  If $h$ dummy centers are added, let
\[
R(h)=\sum_{v\in V(X)}(d-d_X(v))+hd
\]
be the total number of ports.

\emph{Claim 1: $R(h)$ is even for every integer $h$.}  Since $d$ is even, both $|V(X)|d$ and $hd$ are even.  Also $\sum_{v\in V(X)}d_X(v)=2|E(X)|$ is even.  Therefore
\[
R(h)=|V(X)|d-2|E(X)|+hd
\]
is even.  Write $R(h)=2L$.

\emph{Claim 2: we may choose $h\in\{8,9\}$ so that the final order is odd and $L\ge 4d$.}  Let
\[
\nu(h):=|V(X)|+h+R(h).
\]
Because
\[
\nu(9)-\nu(8)=d+1
\]
and $d+1$ is odd, exactly one of $\nu(8)$ and $\nu(9)$ is odd.  Choose $h\in\{8,9\}$ giving odd $\nu(h)$, and set $N=\nu(h)$.  Since $h\ge 8$,
\[
R(h)\ge 8d,
\qquad\text{hence}\qquad
L=R(h)/2\ge 4d.
\]
Also
\[
N=|V(X)|+h+R(h)\ge |V(X)|+8+8d\ge 2d+2.
\]
The consecutive choices 8 and 9 are convenient; any sufficiently large consecutive pair would work. They provide enough slack for $L\ge4d$ while letting us fix the parity of the order by changing $h$ by one.

Split the $2L$ ports arbitrarily into two sets $A$ and $B$, each of size $L$.  This split need not respect centers; the auxiliary graph will simply forbid cross-pairs of ports belonging to the same center.  Define an auxiliary bipartite graph $Q$ with bipartition $(A,B)$ by joining $a\in A$ to $b\in B$ unless $a$ and $b$ are ports of the same center.  Each center has at most $d$ ports, so each port has at most $d$ forbidden vertices on the opposite side.  Hence
\[
\delta(Q)\ge L-d.
\]

\emph{Claim 3: $Q$ contains $d-1$ pairwise edge-disjoint perfect matchings.}  Suppose $s<d-1$ perfect matchings have already been removed.  Since each removed perfect matching uses exactly one edge incident with each vertex, the remaining bipartite graph has minimum degree at least
\[
L-d-s\ge L-d-(d-2)=L-2d+2.
\]
As $s\le d-2$ and $L\ge4d$, we have $L-2d+2\ge L/2$.  Lemma~\ref{lem:half-dense-bipartite} therefore gives a perfect matching in the remaining graph.  At each iteration we choose such a matching in the current remaining graph and delete its edges before continuing, so no edge is selected twice.  After $d-1$ iterations the resulting perfect matchings are pairwise edge-disjoint.  Each iteration is polynomial-time; for example, the Hopcroft--Karp algorithm is sufficient~\cite{HopcroftKarp1973}.

Let $F$ be the union of these $d-1$ matchings, and add the edges of $F$ among the ports.

\emph{Claim 4: the resulting graph $H$ is simple and $d$-regular.}  The graph is simple because the center-center edges are exactly the simple edges of $X$, every center-port edge joins a port to its unique center, and the port-port edges are the simple edges of the union of edge-disjoint matchings.  Every center has degree $d$ by construction.  Every port has one incident center-port edge and one edge in each of the $d-1$ perfect matchings, so every port has degree $1+(d-1)=d$.

\emph{Claim 5: no new triangle is created.}  A triangle on three centers is exactly a triangle of $X$.  A triangle with two centers and one port is impossible because each port is adjacent to only one center.  A triangle with one center $c$ and two ports would require both ports to be ports of $c$; such ports are not adjacent, because if they lie on the same side of $(A,B)$ then $F$ has no edge between them, and if they lie on opposite sides then their edge was excluded from $Q$.  Finally, three ports cannot form a triangle because $F$ is bipartite.  Consequently, every triangle of $H$ lies entirely in the original center set and is exactly a triangle of $X$.

\emph{Claim 6: $H$ is $K_4$-free.}  A $K_4$ using only centers would already be a $K_4$ in $X$.  A $K_4$ containing a new vertex would contain a triangle containing a new vertex, contradicting Claim~5.  Hence $H$ is $K_4$-free.

It remains only to record the size of the construction.  If $n=|V(X)|$, then $d=2p=2n/3=O(n)$.  The number of original ports is at most $nd$, and the number of dummy centers and their ports is at most $9+9d$.  Therefore
\[
|V(H)|\le n+nd+9+9d=O(n^2).
\]
The auxiliary graph $Q$ has polynomial size and the construction of the $d-1=O(n)$ matchings is polynomial.  Finally, the equivalence follows from $d/2=p$ and from Claim~5, which says that the triangle sets of $X$ and $H$ are identical.
\end{proof}

\begin{remark}
The regularization lemma may be useful beyond the present reduction.  It turns an arbitrary $K_4$-free perfect triangle-packing instance into a regular $K_4$-free graph while preserving the triangle set exactly, and it does so using only bipartite matching among newly added ports.
\end{remark}

\section{The NP-completeness proof}\label{sec:np-proof}

We now complete the reduction.  Starting from $X_0$, form $X$ as in Lemma~\ref{lem:padding}.  Apply Lemma~\ref{lem:regularization} to obtain a $d$-regular $K_4$-free graph $H$ of odd order $N\ge 2d+2$.  Finally, define
\[
G:=\ov H.
\]
Because $H$ is $d$-regular on $N$ vertices, $G$ is regular of degree
\[
\Delta(G)=N-1-d.
\]
Thus
\[
\Delta(G)+1=N-d.
\]
Since $G$ is regular, $\defi(G)=0$.  Since $N$ is odd, Proposition~\ref{prop:regular-odd} says that conformability of $G$ is equivalent to the existence of a proper coloring with exactly $N-d$ nonempty odd color classes.

Moreover, $q\ge1$ ensures that $X$ contains a triangle, and the regularization preserves the triangle set.  Since $H$ is also $K_4$-free,
\[
\omega(H)=3
\qquad\text{and hence}\qquad
\alpha(G)=3.
\]
The graph $G$ is connected.  Indeed,
\[
\delta(G)=N-1-d\ge N/2,
\]
where the inequality follows from $N\ge 2d+2$.  If $G$ were disconnected, some component would have size at most $N/2$, and a vertex in that component would have degree at most $N/2-1$, contradicting $\delta(G)\ge N/2$.

\begin{lemma}\label{lem:equivalence}
The graph $G$ is conformable if and only if $H$ contains $d/2$ pairwise vertex-disjoint triangles.
\end{lemma}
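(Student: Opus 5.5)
By Proposition~\ref{prop:regular-odd}, $G$ is conformable exactly when $V(G)$ can be partitioned into $N-d$ nonempty color classes of odd size, each independent in $G$. Independent sets of $G=\ov H$ are cliques of $H$, and $H$ is $K_4$-free by Lemma~\ref{lem:regularization}. Hence such a partition is the same thing as a partition of $V(H)$ into exactly $N-d$ cliques, each of size $1$ or $3$. So the plan is to show that partitions of this kind correspond exactly to families of $d/2$ vertex-disjoint triangles of $H$.

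For the forward direction, suppose a conformable coloring of $G$ exists. Let $t$ be the number of size-three classes and $s$ the number of singleton classes. Then
\[
s+t=N-d,\qquad s+3t=N.
\]
Subtracting gives $2t=d$, so $t=d/2$. Each size-three class is a triangle of $H$, and distinct classes are disjoint. This yields $d/2$ pairwise vertex-disjoint triangles in $H$.

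For the converse, take $d/2$ pairwise vertex-disjoint triangles $T_1,\dots,T_{d/2}$ of $H$. Use each $T_i$ as a color class; it is independent in $G$ because it is a clique in $H$. Make every one of the remaining $N-3d/2$ vertices a singleton class. The number of classes is
\[
\frac d2+N-\frac{3d}{2}=N-d=\Delta(G)+1.
\]
This number of singletons is nonnegative because $N\ge 2d+2$. Every class is nonempty and odd, so this is a proper $(\Delta(G)+1)$-coloring of the required form, and Proposition~\ref{prop:regular-odd} shows that $G$ is conformable.

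Neither direction has a real obstacle. The one point to verify is that the color count is exactly $\Delta(G)+1=N-d$ with no empty classes allowed, since this is what makes the counting identity force $t=d/2$. That is already guaranteed by Proposition~\ref{prop:regular-odd} together with the odd order of $N$ and the regularity of $G$.
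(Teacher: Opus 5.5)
Your proposal is correct and follows the paper's proof essentially verbatim: Proposition~\ref{prop:regular-odd} together with $K_4$-freeness of $H$ forces all $N-d$ classes to have size $1$ or $3$, and the vertex count gives $t=d/2$; conversely, taking the $d/2$ triangles plus singletons yields exactly $N-d$ odd classes. Your nonnegativity check via $N\ge 2d+2$ is harmless but unnecessary, since $d/2$ disjoint triangles already lie inside the $N$ vertices.
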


\begin{proof}
Suppose first that $G$ is conformable.  Since $G$ is regular and $N$ is odd, Proposition~\ref{prop:regular-odd} implies that all $N-d=\Delta(G)+1$ color classes are nonempty and odd.  Each color class is an independent set in $G$, hence a clique in $H$.  Since $H$ is $K_4$-free, every such clique has size at most $3$.  Therefore every color class has size either $1$ or $3$.

Let $t$ be the number of color classes of size $3$.  The remaining $N-d-t$ color classes have size $1$.  Counting vertices gives
\[
N=3t+(N-d-t)=N-d+2t.
\]
Hence $2t=d$, so $t=d/2$.  Each size-three color class is a triangle of $H$, and the color classes are pairwise disjoint.  Thus $H$ has $d/2$ pairwise vertex-disjoint triangles.

Conversely, suppose $H$ has $d/2$ pairwise vertex-disjoint triangles.  In $G=\ov H$, each such triangle is an independent set of size $3$.  Assign one color to each of these triangles, and assign a fresh singleton color to every remaining vertex.  The number of colors used is
\[
\frac d2+\left(N-3\frac d2\right)=N-d=\Delta(G)+1.
\]
All color classes have odd size, either $3$ or $1$.  Since $G$ is regular and $N$ is odd, Proposition~\ref{prop:regular-odd} gives a conformable coloring of $G$.
\end{proof}

\begin{proof}[Proof of Theorem~\ref{thm:main}]
Membership in NP is immediate.  A certificate is a map $\varphi:V(G)\to\{1,\ldots,\Delta(G)+1\}$; in polynomial time one checks properness, computes the color-class sizes, computes $\defi(G)$, and counts the classes whose parity differs from that of $|V(G)|$.

For NP-hardness, reduce from \PTPthree{}.  Given $X_0$, construct $X$, $H$ and $G=\ov H$ as above.  The construction is polynomial by Lemma~\ref{lem:regularization}.  By Lemma~\ref{lem:padding}, $X_0$ has a perfect triangle packing if and only if $X$ has $p$ pairwise vertex-disjoint triangles.  By Lemma~\ref{lem:regularization}, this is equivalent to $H$ having $d/2=p$ pairwise vertex-disjoint triangles.  By Lemma~\ref{lem:equivalence}, this is equivalent to $G$ being conformable.

The output graph $G$ is connected, regular, of odd order, satisfies $\alpha(G)=3$, and has $\Delta(G)=N-1-d\ge |V(G)|/2$.  Therefore \CONF{} is NP-hard even on the restricted class stated in the theorem.  Since \CONF{} belongs to NP, it is NP-complete.
\end{proof}

\section{Conclusion}\label{sec:conclusion}

We proved that \CONF{} is NP-complete, resolving the general complexity question for conformable colorings.  The hardness holds even for connected regular graphs of odd order with independence number exactly three and degree at least half the order.  In this regime the deficiency is zero, so every color class must have exactly the parity of the order; the hardness therefore comes from the parity structure of $(\Delta+1)$-colorings, not from special classes permitted by positive deficiency.

The result complements the known polynomial-time classifications.  Regular bipartite graphs and subcubic graphs are tractable, whereas dense connected regular graphs with $\alpha=3$ are NP-complete. Conformability thus has both polynomial-time and NP-complete cases, with the boundary between them still open for several natural graph classes.

Several questions remain open.
\begin{itemize}[leftmargin=2em]
\item Does NP-hardness persist under additional sparsity assumptions, or for bounded maximum degree $\Delta\ge4$?
\item What is the complexity on structured graph classes such as cographs, claw-free graphs, chordal graphs, planar graphs, or line graphs beyond the known regular-line-graph results?
\item What is the correct complexity for graphs with $\alpha(G)\le2$?  In the regular odd zero-deficiency regime our mechanism cannot apply, since every odd color class would be a singleton; positive-deficiency or nonregular inputs may behave differently.
\end{itemize}

These questions would refine the boundary between the positive algorithmic results and the hardness proved here.

\section*{Funding}
J\'ozsef Pint\'er is funded by Project No.~KDP-\allowbreak IKT-\allowbreak 2023-\allowbreak 900-\allowbreak I1-\allowbreak 00000957/\allowbreak 0000003 with support provided by the Ministry of Culture and Innovation of Hungary from the National Research, Development and Innovation Fund, financed under the KDP-2023 funding scheme. The funder had no role in the design of the study, the preparation of the manuscript, or the decision to submit the article for publication.

\section*{Declaration of competing interest}
The author declares that he has no competing interests.

\section*{Data availability}
No datasets were generated or analyzed during the current study.

\section*{Declaration of generative AI and AI-assisted technologies in the manuscript preparation process}
During the preparation of this work, the author used OpenAI's ChatGPT and Anthropic's Claude in order to support brainstorming, organization, \LaTeX{} drafting, and editorial revision. After using these tools, the author reviewed and edited the content as needed and takes full responsibility for the content of the published article.

\section*{Acknowledgments}
The author thanks Lu\'erbio Faria, Mauro Nigro, Myriam Preissmann, and Diana Sasaki, whose work on conformability brought the complexity question to the foreground and provided the starting point for this paper.

\bibliographystyle{abbrvurl}
\bibliography{references}

\end{document}